\begin{document}
	
	 \newcommand{\be}{\begin{equation}}
	 \newcommand{\ee}{\end{equation}}
	 \newcommand{\bt}{\beta}
	 \newcommand{\al}{\alpha}
	 \newcommand{\laa}{\lambda_\alpha}
	 \newcommand{\lab}{\lambda_\beta}
	 \newcommand{\no}{|\Omega|}
	 \newcommand{\nd}{|D|}
	 \newcommand{\Om}{\Omega}
	 \newcommand{\h}{H^1_0(\Omega)}
	 \newcommand{\lt}{L^2(\Omega)}
	 \newcommand{\la}{\lambda}
	 \newcommand{\ro}{\varrho}
	 \newcommand{\cd}{\chi_{D}}
	 \newcommand{\cdc}{\chi_{D^c}}
\newcommand{\re}{\Re\mathrm{e}} 
\newcommand{\im}{\Im\mathrm{m}} 
\newcommand{\eps}{\varepsilon}
\newcommand{\z}{\xi}
\newcommand{\s}{\sigma}
\newcommand{\R}{\mathbb{R}}
\newcommand{\N}{\mathbb{N}}
\newcommand{\Z}{\mathbb{Z}}
\renewcommand{\o}{\overline}
\renewcommand{\u}{\underline}

\numberwithin{equation}{section}

\def\debaixodaseta#1#2{\mathrel{}\mathop{\longrightarrow}\limits^{#1}_{#2}}
\def\debaixodasetafraca#1#2{\mathrel{}\mathop{\rightharpoonup}\limits^{#1}_{#2}}
\def\debaixodolim#1#2{\mathrel{}\mathop{\lim}\limits^{#1}_{#2}}
\def\debaixodoinf#1#2{\mathrel{}\mathop{\inf}\limits^{#1}_{#2}}
\def\debaixodoliminf#1#2{\mathrel{}\mathop{\liminf}\limits^{#1}_{#2}}
\def\debaixodolimsup#1#2{\mathrel{}\mathop{\limsup}\limits^{#1}_{#2}}
\def\debaixodosup#1#2{\mathrel{}\mathop{\sup}\limits^{#1}_{#2}}
\def\debaixodomin#1#2{\mathrel{}\mathop{\min}\limits^{#1}_{#2}}

	 \newtheorem{thm}{Theorem}[section]
	 \newtheorem{cor}[thm]{Corollary}	
	 \newtheorem{lem}[thm]{Lemma}
	 \newtheorem{prop}[thm]{Proposition}
	 \theoremstyle{definition}
	 \newtheorem{defn}{Definition}[section]
	 \newtheorem{exam}{Example}[section]
	 \theoremstyle{remark}
	 \newtheorem{rem}{Remark}[section]
	 \numberwithin{equation}{section}
	 \renewcommand{\theequation}{\thesection.\arabic{equation}}
	 \numberwithin{equation}{section}
	 %
	 %
	 %
	\title[Existence Result for Generalized Variational Equality   ]{Existence Result for Generalized Variational Equality  }
	\author[Allahkaram Shafie,  Farid Bozorgnia    ]{Allahkaram Shafie,  Farid Bozorgnia      }
	 \address{Department of Mathematics, Instituto Superior T\'{e}cnico, Lisbon.} \email{bozorg@math.ist.utl.pt}

	 \date{\today}
	
	 \thanks{F. Bozorgnia was  supported by the Portuguese National Science Foundation through FCT fellowships SFRH/BPD/33962/2009}
	
	 \begin{abstract}
	 In this paper we prove the existence of solution to the Stampachia variational inequality under weakened  assumptions  on the given operator. As a consequence,  we provide some sufficient conditions that under them the generalized  equation $0\in T(x)$ has a solution. Furthermore,  by using  generalized  results of continuity and monotonicity, we  extend the related  existence results  and we   answer    an open problem proposed  by  Kassay and Miholka (J Optim Theory Appl 159 (2013) 721-740).

	 \end{abstract}

	 \maketitle
	
	\noindent {\bf Keywords:} Variational inequality; generalized monotonicity; generalized continuity; existence results.

	
\[
\]
\[
\]

	  \section{Introduction}

 The  theory of variational inequality has  been investigated extensively  as     methodology  to study of equilibrium problems. Equilibrium is a central concept in numerous disciplines
including economics, management science, operations
research, and engineering, see \cite{GM, FSB, ZLC}.

In  1966,    Hartman and Stampacchia   introduced the  variational inequality   as a tool for the study of
partial differential equations with applications principally drawn from mechanics, see \cite{HS}.

In \cite{KS} existence result for variational inequalities   is given by generalized
monotone operators. As a consequence, the authors  conclude    the subjectivity for  some classes of set-valued operators. By
strengthening the continuity assumptions, they show  similar subjectivity results without any monotonicity assumption.

 Finding  the zeroes of a  set-valued map $T(x)$ are particularly important.    Indeed, zeroes of the
  subdifferential operator of  a function defined  on the same
space are precisely the minimum points of this function. Hence, there is an important
link between the theory of (generalized) monotone operators and optimization theory, see for  instance  \cite{Gk, HH, ASA}.

\section{Preliminarily and     Mathematical background}

Throughout this paper,  $X$ is Banach space, $X^*$  denotes its topological dual and $\langle \cdot ,\cdot\rangle$ the duality pairing. For a nonempty set $A\subset X$, ${\rm cor}A,{\rm cl}A,{\rm cl_w}A,$ and $ {\rm conv}A$, stand for the algebraic interior, closure, weakly closure, and  convex hull of the set $A,$ respectively. Also for $x^*\in X^*$ we denote $\mathbb{R_{++}}x^*=\{tx^*:~~~~~t>0\}$.


 Let us  recall   the classical terminology of generalized monotonicity of set-valued maps that we will use in the sequel. A set valued map $T:X\rightrightarrows X^*$ is said to be
\begin{itemize}
 \item Quasimonotone on a subset $K$, provided that for all $x,y\in K,$
\begin{equation*}
\exists {x^*} \in T(x): {\left\langle {{x^*},y - x} \right\rangle  > 0 \Rightarrow \left\langle {{y^*},y - x} \right\rangle  \ge 0\,} \,\,\,\,\,\forall {y^*} \in T(y);
\end{equation*}

 \item Properly quasimonotone on a subset $K,$ provided that for all
  \[
  \{x_1,x_2,\cdots ,x_n\}\subseteq K, \,  \textrm{  and for all } \,  x\in {\rm conv}\{x_1,x_2,...,x_n\},
   \]
   there exists $i\in \{1,2,\cdots,n\}$ such that
\begin{equation*}
\langle x_i^*,x_i-x\rangle\geq 0~~~\forall x_i^*\in T(x_i);
\end{equation*}

 \item Pseudomonotone on a subset $K,$ provided that for all $x,y\in K,$
\begin{equation*}
\exists {x^*} \in T(x): {\left\langle {{x^*},y - x} \right\rangle  \ge 0 \Rightarrow \left\langle {{y^*},y - x} \right\rangle  \ge 0\,}  \,\,\,\,\forall {y^*} \in T(y).
\end{equation*}

\end{itemize}

 A set-valued operator $T: X\rightrightarrows X^*$ is said to be upper sign-continuous on a convex subset $K,$ if   for any $x,y\in K,$ the following implication holds:
 \begin{equation}
{\forall t \in (0,1)\,\,\,\mathop {\inf }\limits_{x_t^* \in T(x_t)} \left\langle {x_t^*,y - x} \right\rangle  \ge 0\, \Rightarrow \mathop {\sup}\limits_{{x^*} \in T(x)} \left\langle {{x^*},y - x} \right\rangle  \ge 0},
\end{equation}
where $x_t=tx+(1-t)y.$

Accordingly,    $T$ is   called lower sign-continuous on a convex subset $K$ if, for  or any $x,y\in K,$ the following implication holds:
 \begin{equation}
{\forall t \in (0,1)\,\,\,\mathop {\inf }\limits_{x_t^* \in T(x_t)} \left\langle {x_t^*,y - x} \right\rangle  \ge 0\, \Rightarrow \mathop {\inf}\limits_{{x^*} \in T(x)} \left\langle {{x^*},y - x} \right\rangle  \ge 0}.
\end{equation}

By these  definitions it is clear that any lower sign-continuous map is also upper
sign-continuous. Furthermore,   if  $T,S:X\rightrightarrows X^*$ be set-valued maps and $T\subseteq S$ and $T$ be lower sign-continuous, then $S$ is lower sign-continuous.

  By  the following example we underline  that  this implication is not true for lower semi-continuous mappings.
\begin{exam}
Consider the following  set-valued map $T:\mathbb{R\rightrightarrows \mathbb{R}}$ as \[T(x) = \left\{ \begin{array}{l}
 \left\{ { - 1,0} \right\}\,\,\,\,\,\,\,x \ne 0, \\
 \left\{ 0 \right\}\,\,\,\,\,\,\,\,\,\,\,\,\,\,\,\,\,\,\,x = 0. \\
 \end{array} \right.\]
 It is easy to cheek that ${\rm conv}T$ is lower semi-continuous,  but $T$ is not lower semi-continuous.
\end{exam}

Algebraic interior is defined as $${\rm cor} K = \left\{ {y \in K:\,\,\,\forall x \in X,\,\exists \lambda  > 0\,,\,\,\forall t \in [0,\lambda )\,\,\,\,\,\,\,y + t x \in K\,} \right\}.$$
Note  that  always ${\rm int}K\subseteq {\rm cor}K\subseteq K$ and algebraic interior is weaker than of topological interior   by  the following example.
\begin{exam}
Let $X=l^p, p\ge 1$ and   consider the convex set $K$ defined by
$K = \left\{ {x = {{(x(n))}_{n \in\mathbb{N} }} \in {l^p},{\mkern 1mu} {\mkern 1mu} {\mkern 1mu} {\mkern 1mu} {\mkern 1mu} {\mkern 1mu} \forall n \in\mathbb{N} ,{\mkern 1mu} {\mkern 1mu} \,\,\,{x}(n) \ge 0{\mkern 1mu} } \right\}.$ Then   ${\rm int}K=\emptyset$ and $$corK = \left\{ {x = {{(x(n))}_{n \in\mathbb{N} }} \in {l^p},\,\,\,\,   x(n) > 0\,\,\,\,\,\forall n \in\mathbb{N} } \right\}.$$
\end{exam}
Also,  we say that the map $T:X\rightrightarrows X^*$ is locally upper sign-continuous at $x,$  if there exists  a convex neighbourhood $U$ of $x$ and an upper sign-continuous submap $\Phi_x:U\rightrightarrows X^*$ with nonempty convex $w^*$-compact values, satisfying
\[
\Phi_x(u)\subset T(u)\setminus\{0\},\, \textrm{ for any},\, u\in U.
\]
In the sequel, given a set-valued map $T:X\rightrightarrows X^*$ we  consider its
convex hull map ${\rm conv} T:X\rightrightarrows X^*$ defined by ${\rm conv}T(x)={\rm conv}(T(x)).$

The proof of the following  proposition  is straightforward.
\begin{prop}\label{p2}
Let $T:X\rightrightarrows X^*$ be a set-valued map and $x\in {\rm dom}T.$ If $T$ is locally upper sign-continuous at $x,$  then ${\rm conv} T$ is locally upper sign-continuous at $x.$
\end{prop}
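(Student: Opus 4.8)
The plan is to observe that the very submap which witnesses the local upper sign-continuity of $T$ at $x$ also witnesses that of ${\rm conv}\,T$, so that no fresh construction is needed. To make this precise, I first unpack the conclusion: proving that ${\rm conv}\,T$ is locally upper sign-continuous at $x$ amounts to producing a convex neighbourhood $V$ of $x$ together with an upper sign-continuous submap $\Psi_x:V\rightrightarrows X^*$ having nonempty convex $w^*$-compact values and satisfying $\Psi_x(u)\subset ({\rm conv}\,T)(u)\setminus\{0\}$ for every $u\in V$.

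By hypothesis $T$ is locally upper sign-continuous at $x$, so there already exist a convex neighbourhood $U$ of $x$ and an upper sign-continuous submap $\Phi_x:U\rightrightarrows X^*$ with nonempty convex $w^*$-compact values such that $\Phi_x(u)\subset T(u)\setminus\{0\}$ for all $u\in U$. The next step is simply to take $V:=U$ and $\Psi_x:=\Phi_x$, and then to check that this choice already meets every requirement for ${\rm conv}\,T$.

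The only item demanding a word of justification is the containment. For each $u\in U$ one has the elementary inclusion $T(u)\subset {\rm conv}(T(u))=({\rm conv}\,T)(u)$; since $0\notin\Phi_x(u)$ by construction, this gives $\Phi_x(u)\subset T(u)\setminus\{0\}\subset ({\rm conv}\,T)(u)\setminus\{0\}$, exactly as needed. The remaining properties, namely upper sign-continuity and nonempty convex $w^*$-compactness of the values, are inherited verbatim, because $\Psi_x$ and $\Phi_x$ are literally the same map. I would also record that $x\in {\rm dom}\,{\rm conv}\,T$, since ${\rm conv}(T(x))\neq\emptyset$ precisely when $T(x)\neq\emptyset$.

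I expect no genuine obstacle: the entire content of the statement reduces to the trivial set inclusion $T(u)\subset{\rm conv}(T(u))$, which shows that passing to the convex hull can only enlarge the values of the map and therefore preserves the existence of a suitable locally defined submap. This is precisely why the proposition can be declared straightforward; the subtlety, if any, is merely to notice that one need not convexify $\Phi_x$ at all, the original witness transferring unchanged.
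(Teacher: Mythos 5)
Your proof is correct, and it is exactly the "straightforward" argument the paper has in mind (the paper omits the proof entirely, stating only that it is straightforward): the witness $(U,\Phi_x)$ for $T$ serves unchanged for ${\rm conv}\,T$ because $T(u)\setminus\{0\}\subset{\rm conv}(T(u))\setminus\{0\}$. Nothing further is needed.
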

The following example shows that the reverse of Proposition \ref{p2} is not true.
\begin{exam}
Let  the set-valued map $T:\mathbb{R}\rightrightarrows\mathbb{R}$ be defined by
\begin{equation}\label{55}
T(x) = \left\{ \begin{array}{l}
 \left\{ { - 1,1} \right\}\,\,\,\,\,\,\,x = 0, \\
 \left[ { - 1,1} \right]\,\,\,\,\,\,\,\,\,\,x \ne 0. \\
 \end{array} \right.
\end{equation}
Note that
\[
{\rm conv}T(x)=[-1,1].
\]
One can check  that ${\rm conv}T$  is locally upper sign-continuous but  $T$ is not locally upper sign-continuous  at $x=0$.
\end{exam}
\begin{defn}
A map $T:X\rightrightarrows X^*$ is said to be weakly dually lower semicontinuous on a subset $K$ if for any $x\in K$ and for any net $(y_{\alpha})_{\alpha}\subseteq K$ such that $y_{\alpha}\rightharpoonup y,$ the following implication holds:
\begin{equation*}
{\mathop {\lim \sup }\limits_{\alpha} \mathop {\inf }\limits_{y_{\alpha}^* \in T({y_{\alpha}})} \left\langle {{y_{\alpha}^*},{y_{\alpha}} - x} \right\rangle\ge 0\,\Rightarrow \mathop {\inf }\limits_{{y^*} \in T(y)} \left\langle {{y^*},y - x} \right\rangle  \ge 0\,}.
\end{equation*}
\end{defn}
It is worth to mention  that any weakly lower semicontinuous map on $K$ is weakly dually lower semicontinuous on $K$ but   this concept is strictly weaker than the lower semicontinuouity. For example choose  $K=[-1,1],$  and  define  the set-valued map $T:\mathbb{R}\rightrightarrows\mathbb{R}$   by
\begin{equation*}
T(x)=\left\{ \begin{array}{l}
 [ - 1,0],\,\,\,\,\,\,\,\,\,\,\,\text{if}~\,x = 0, \\
 \{  0\} ,\,\,\,\,\,\,\,\,\,\,\,\,\,~\text{otherwise}. \\
 \end{array} \right.
\end{equation*}
$T$ is  dually lower semicontinuous on $K$ but it is not lower semicontinuous at $x=0.$ Also note  that if $T$ is  a dually lower semicontinuous,  then ${\rm conv}T$ is so. However,  the reciprocal is not true in general. For instance the set-valued map $T:\mathbb{R}\rightrightarrows\mathbb{R}$ defined by
\begin{equation*}
T(x)=\left\{ \begin{array}{l}
 \mathbb{Q},\,\,\,\,\,\,\,\,\,\,\,\,\text{if}~\,x \ge 0, \\
  \mathbb{Q}^{c},\,\,\,\,\,\,\,\,\,\text{if}~{\kern 1pt} x < 0, \\
 \end{array} \right.
\end{equation*}
is not dually lower semicontinuous but ${\rm conv}T=\mathbb{R}$ is dually lower semicontinuous.

The variational inequality problem which  we consider in this paper can be formulated as follows. Given a nonempty and convex subset $K$ of $X,$ find an element $\bar{x}\in K$ such that
\begin{equation*}
{\mathop {\sup }\limits_{{x^*} \in T(\bar{x})} \left\langle {{x^*},y - \bar{x}} \right\rangle  \ge 0\,\,\,\,\,\,\,\forall y \in K.}~~~{\rm (VI)}
\end{equation*}
We will consider the following concepts of solutions of the Stampacchia variational inequality.
\begin{itemize}
\item Stampacchia solutions:
\begin{equation*}
S(T,K) = \left\{ {x \in K:\,\,\exists {x^*} \in T(x)\,\,\, \text{with} \left\langle {{x^*},y - x} \right\rangle  \ge 0,\,\,\,\forall y \in K} \right\}.
\end{equation*}
\item  Star Stampacchia solutions:
\begin{equation*}
{S^*}(T,K) = \left\{ {x \in K:{\mkern 1mu} {\mkern 1mu} \exists {x^*} \in T(x) \setminus \{ 0\} {\mkern 1mu} ,{\rm{ with}}\,\,\left\langle {{x^*},y - {\rm{ }}x} \right\rangle  \ge 0,{\mkern 1mu} {\mkern 1mu} \,\forall y \in K} \right\}.
\end{equation*}
\item Weak Stampacchia solutions:
\begin{equation}
{S^w}(T,K) = \left\{ {x \in K:{\mkern 1mu} {\mkern 1mu} \forall y \in K\,\,\exists {x^*} \in T(x){\mkern 1mu} {\mkern 1mu} {\rm{ with}}\left\langle {{x^*},y - {\rm{ }}x} \right\rangle  \ge 0 {\mkern 1mu} {\mkern 1mu} {\mkern 1mu} } \right\}.
\end{equation}
\item  Minty solutions
\begin{equation*}
M(T,K) = \left\{ {x \in K:\,\,\forall {y^*} \in T(y)\,,\,\,\forall y \in K,\,\,\left\langle {{y^*},y - x} \right\rangle  \ge 0\,} \right\}.
\end{equation*}
\end{itemize}

We recall that $x\in K$ is a local solution of the Minty variational inequality if there exists a neighborhood $U$ of $x$ such that $x\in M(T,K\cap U).$  The set of all local   solution is denoted  by $LM(T,K)$. It is obvious that $M(T,K)\subseteq LM(T,K).$ The following example illustrates  that converse is not necessarily true.
\begin{exam}
Let $T:\mathbb{R}\rightrightarrows \mathbb{R}$ be set-valued map as \[     T(x) = \left\{ \begin{array}{l}
 \{ 1\} \,\,\,\,\,\,\,x \in ( - 1,1), \\
 \{ 2\} \,\,\,\,\,\,\,\,\,\,\,x \notin ( - 1,1).\\
 \end{array} \right.        \]
  Clearly $0\in LM(T,K)$ but $0\notin M(T,K).$
\end{exam}

\begin{rem}\label{re1}
One  can  see that the solution of  Stampacchia variational inequality problem is  also the solution of the  problem (VI).
\end{rem}

The topic of variational inequality  appears in   the calculus of variations
in minimizing  a functional  over a convex set of constraints. The
  Euler equation must be replaced by a set of inequalities. Here,  we briefly mention the classical obstacle problem.  Consider the following functional, $I (u),$ defined
\[
I (u)=\int_{\Omega} L(x, u, \nabla u)dx.
\]
The Lagrangian $L(x, u, z)$  is assumed to be jointly convex in $(u, z)$, proper, and lower semi-continuous. The  obstacle problem is formulated  as a constrained minimization:
\[
u^*=  \underset{ u\in K} {\textrm{argmin}} \,  I(u);
\]
where  the convex constraint set $K$  is given by
\[
 K={\{ u \in H, u \ge \varphi  \, \, \textrm{in}\, \,  \Omega, \quad u=g \,\, \textrm{on the boundary} }\}.
 \]
  Let $DI$  be the derivative associated with
the G\^{a}teaux differentiable functional $I,$  i.e.
\[
\frac{d}{d\varepsilon}I (u+\varepsilon v)|_{\varepsilon=0}=
\left\langle DI(u), v  \right\rangle.
\]
Then the minimization problem is equivalent to finding  $u^*\in K$ such that:
\[
\left\langle  DI(u^*), u^{*}-v \right\rangle >0, \quad \forall v \in K.
 \]

\section{Existence results}
In this section, we present our results.

\begin{lem}\label{z}
Let $A$ be a subset of $X$ and $x^*\in X^*$ be nonzero and $y\in X$ be given. If $\langle x^{*}, x-y\rangle\geq 0$  for all $x\in A$, then $\langle x^{*},x-y\rangle>0$ for all $x\in {\rm cor}A.$\end{lem}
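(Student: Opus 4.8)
The plan is to argue by contradiction, exploiting the definition of the algebraic interior together with the fact that a nonzero functional must be strictly negative along some direction. First I would fix an arbitrary point $z\in {\rm cor}A$. Since ${\rm cor}A\subseteq A$, the hypothesis already yields $\langle x^*, z-y\rangle\geq 0$, so it suffices to rule out the equality case and upgrade this to a strict inequality.

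Assume, for contradiction, that $\langle x^*, z-y\rangle = 0$. Because $x^*\neq 0$, there exists a vector $w\in X$ with $\langle x^*, w\rangle < 0$: indeed $x^*\neq 0$ means $\langle x^*, v\rangle\neq 0$ for some $v\in X$, and after replacing $v$ by $-v$ if necessary we may assume the value is strictly negative. This is the single place where the hypothesis $x^*\neq 0$ is used.

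Next I would invoke the definition of ${\rm cor}A$ for this particular direction $w$: there exists $\lambda>0$ such that $z+tw\in A$ for every $t\in[0,\lambda)$. Fixing any $t\in(0,\lambda)$ and applying the hypothesis to the admissible point $z+tw\in A$ gives
\[
0\leq \langle x^*, (z+tw)-y\rangle = \langle x^*, z-y\rangle + t\langle x^*, w\rangle = t\langle x^*, w\rangle < 0,
\]
which is a contradiction. Hence the equality case cannot occur, and therefore $\langle x^*, z-y\rangle > 0$ for every $z\in {\rm cor}A$, as claimed.

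I do not anticipate any genuine obstacle here: once the right direction has been selected, the conclusion follows from a single linearity computation. The only point requiring a little care is the choice of $w$ — one must move into $A$ along a direction in which $x^*$ strictly decreases, and the definition of the algebraic interior is precisely what guarantees that such a move keeps the point inside $A$ for all sufficiently small positive steps, so that the hypothesis can be applied to the displaced point.
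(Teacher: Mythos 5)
Your proof is correct and takes essentially the same approach as the paper: both assume the pairing vanishes at a point of ${\rm cor}A$, use the definition of the algebraic interior to perturb that point inside $A$ along a chosen direction, and apply the hypothesis to the displaced point to reach a contradiction with $x^*\neq 0$. The only cosmetic difference is that you select a single direction $w$ with $\langle x^*,w\rangle<0$ up front, whereas the paper lets the direction range over all of $X$, deduces $\langle x^*,z\rangle\ge 0$ for every $z$, and then concludes $x^*=0$; your version is slightly more direct.
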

\begin{proof}
Suppose on the contrary,  there exists $x_0\in {\rm cor}A$ with  $\langle x^{*},x-y\rangle\leq0$ this gives   $\langle x^{*},x-y\rangle=0.$ Consider $z\in X$ so  there is a positive net ${t_{\alpha}\subset\mathbb{R}}$ such that  $x_0+t_{\alpha}z\longrightarrow x_0.$ For  $x_0\in {\rm cor} A$   there exists $\beta$ such that
\[
x+t_{\beta}z\in {\rm cor}A\subseteq A,
\]
 which by assumption implies that $\langle x^{*},x_0+t_{\beta}z-y\rangle\geq0.$ From the last relation, we obtain
\begin{equation*}
 \langle {x^*},{x_0} + {t_\beta }z - y \rangle  =  \langle {x^*},{x_0} - y \rangle  + {t_\beta } \langle {x^*},z \rangle  = {t_\beta } \langle {x^*},z \rangle  \ge 0,
\end{equation*}
hence $ \langle{x^*},z \rangle\ge 0.$  Next, since $z\in X$ is    arbitrary, we conclude  that  $x^*=0$       which is contradiction.
\end{proof}
Notice that if  $A$ is convex set,  then the reverse of Lemma \ref{z}  holds.

We need the following lemma in the sequel.
\begin{lem}\label{l1}
Let $A$ be a convex subset of $X$ and ${\rm cor} A\ne\emptyset.$  Then
\[
{{\rm cl_w}A={\rm cl_w}({\rm cor}A).}
\]
\end{lem}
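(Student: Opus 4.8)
The plan is to establish the two inclusions separately, the nontrivial one resting on a ``segment principle'' for the algebraic interior of a convex set. Since ${\rm cor}A\subseteq A$, monotonicity of the weak closure gives ${\rm cl_w}({\rm cor}A)\subseteq {\rm cl_w}A$ at once, so the whole content lies in the reverse inclusion ${\rm cl_w}A\subseteq {\rm cl_w}({\rm cor}A)$. Because ${\rm cl_w}({\rm cor}A)$ is weakly closed and the weak closure of $A$ is the smallest weakly closed set containing $A$, it suffices to prove the pointwise statement $A\subseteq {\rm cl_w}({\rm cor}A)$.

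First I would fix a point $c\in {\rm cor}A$, available by hypothesis, and record the key lemma: for every $a\in A$ the half-open segment $\{z_t=tc+(1-t)a:\ t\in(0,1]\}$ lies entirely in ${\rm cor}A$. To see this, fix $t\in(0,1]$ and an arbitrary direction $v\in X$; writing $z_t+sv=t\bigl(c+(s/t)v\bigr)+(1-t)a$ and using $c\in {\rm cor}A$ to produce $\lambda>0$ with $c+rv\in A$ for all $r\in[0,\lambda)$, the convexity of $A$ (the coefficients $t$ and $1-t$ being nonnegative and summing to $1$, since $t>0$) shows $z_t+sv\in A$ for all $s\in[0,t\lambda)$. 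As $v$ was arbitrary, this means $z_t\in {\rm cor}A$.

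With the segment principle in hand the conclusion is immediate: for fixed $a\in A$ one has $\|z_t-a\|=t\,\|c-a\|\to 0$ as $t\to 0^{+}$, so $z_t\to a$ in norm and a fortiori weakly. Since each $z_t\in {\rm cor}A$, the limit $a$ belongs to ${\rm cl_w}({\rm cor}A)$ (indeed already to the norm closure). Hence $A\subseteq {\rm cl_w}({\rm cor}A)$, and passing to weak closures yields ${\rm cl_w}A\subseteq {\rm cl_w}({\rm cor}A)$, which together with the trivial inclusion completes the proof.

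I expect the only genuine obstacle to be the verification of the segment principle, that is, checking that algebraic-interior membership is preserved along the segment toward $a$; the subtlety there is purely the bookkeeping of the rescaling $r=s/t$ of the direction $v$, together with the requirement $t>0$ so that the convex combination has admissible coefficients. Everything else---the trivial inclusion and the passage to the (weak) limit---is routine, and I would note that the argument in fact establishes the stronger statement that the norm closures of $A$ and ${\rm cor}A$ coincide, of which the stated weak identity is an immediate consequence.
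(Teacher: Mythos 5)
Your proof is correct, and it takes a cleaner route than the paper's. The paper argues directly with an arbitrary $x\in{\rm cl_w}A$: it picks a net $x_{\alpha}\in A$ with $x_{\alpha}\rightharpoonup x$, invokes the segment principle $[a,x_{\alpha})\subset{\rm cor}A$ as a citation (Lemma 1.9 of the vector-optimization reference), and then pushes the perturbed net $t_{\alpha}a+(1-t_{\alpha})x_{\alpha}$ weakly to $x$. You instead reduce the problem to the pointwise inclusion $A\subseteq{\rm cl_w}({\rm cor}A)$ via minimality of the weak closure, and prove that inclusion by norm convergence of $z_t=tc+(1-t)a$ to $a$, proving the segment principle from scratch rather than citing it. Your organization buys two things: it is self-contained, and it actually establishes the stronger fact that the norm closures of $A$ and ${\rm cor}A$ agree. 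More importantly, it sidesteps a delicate point that the paper glosses over: for nets (as opposed to sequences), weak convergence does not imply boundedness, so the paper's claim that $t_{\alpha}a+(1-t_{\alpha})x_{\alpha}-x_{\alpha}=t_{\alpha}(a-x_{\alpha})$ tends weakly to zero for an arbitrary choice of $t_{\alpha}\to 0$ requires either a diagonal choice of $t_{\alpha}$ depending on $x_{\alpha}$ or an extra justification; your argument never touches weakly convergent nets and so avoids the issue entirely. The only hypothesis you use beyond the paper's is that $X$ is normed, which is granted since $X$ is assumed Banach throughout.
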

\begin{proof}
Clearly, we have  ${\rm cl_w}({\rm cor}A)\subseteq {\rm cl_w}A.$ To see the  reverse inclusion,
   let $x\in{\rm cl_w}A, a\in{\rm cor}A,$ then there exists $x_{\alpha}\in A$ such that $x_{\alpha}\rightharpoonup x.$     Thanks to Lemma 1.9 in  \cite{JJ},  one has $[a,x_{\alpha})\subset{\rm cor}A.$ Next  choose $0<t_{\alpha}<1$ such that $t_{\alpha}\longrightarrow 0.$ Hence $t_{\alpha}a+(1-t_{\alpha})x_{\alpha}\in corA.$ On the other hand
     \[
     t_{\alpha}a+(1-t_{\alpha})x_{\alpha} \rightharpoonup x,
      \]
      which implies that $x\in {\rm cl_w}({\rm cor}A).$
\end{proof}
In the next Lemma,  we  provide conditions on  the map $T$ that  relate the $LM$ solutions and  $S^{w}$ solutions.

\begin{lem}\label{l2}
Let $K$ be nonempty convex subset of  $X$  and  $T:X\rightrightarrows X^*$ be a set-valued map.
If ${\rm conv}T$ is locally upper sign-continuous, then
\[
LM(T,K)\subseteq S^w(T,K).
\]
\end{lem}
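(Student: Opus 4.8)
The plan is to fix $x\in LM(T,K)$ and show directly that $x\in S^w(T,K)$, i.e. that for every $y\in K$ there is some $x^*\in T(x)$ with $\langle x^*, y-x\rangle\ge 0$. By definition of $LM(T,K)$ there is a neighbourhood $U$ of $x$ with $x\in M(T,K\cap U)$, so that
\[
\langle z^*, z-x\rangle \ge 0 \qquad \text{for all } z\in K\cap U,\ z^*\in T(z).
\]
Since ${\rm conv}T$ is locally upper sign-continuous at $x$, I would then pick a convex neighbourhood $V\subseteq U$ of $x$ together with an upper sign-continuous submap $\Phi_x:V\rightrightarrows X^*$ having nonempty convex $w^*$-compact values and satisfying $\Phi_x(u)\subset {\rm conv}T(u)\setminus\{0\}$ for all $u\in V$ (intersecting the neighbourhood supplied by local upper sign-continuity with $U$).

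The first genuine step is a localisation, which I expect to be the main obstacle: $y$ need not lie in $V$, so the upper sign-continuity of $\Phi_x$ cannot be applied to the pair $(x,y)$ directly. To get around this I would fix $y\in K$ and choose $t_0\in(0,1)$ close enough to $1$ that $y':=t_0x+(1-t_0)y\in V\cap K$; this is possible because $V$ is a neighbourhood of $x$ and $K$ is convex. The vector $y'-x=(1-t_0)(y-x)$ is a strictly positive multiple of $y-x$, so any sign information proved for $y'-x$ transfers verbatim to $y-x$.

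Next I would run the segment argument. For $s\in(0,1)$ put $x_s:=sx+(1-s)y'$; by convexity $x_s\in V\cap K\subseteq K\cap U$ and $x_s-x=(1-s)(y'-x)$ is a positive multiple of $y'-x$. The Minty property above, applied at $z=x_s$, yields $\langle x_s^*, y'-x\rangle\ge 0$ for every $x_s^*\in T(x_s)$, hence $\inf_{x_s^*\in T(x_s)}\langle x_s^*, y'-x\rangle\ge 0$. Because the infimum of a linear functional over a set equals its infimum over the convex hull, and $\Phi_x(x_s)\subseteq {\rm conv}T(x_s)$, I obtain $\inf_{x_s^*\in \Phi_x(x_s)}\langle x_s^*, y'-x\rangle\ge 0$ for all $s\in(0,1)$. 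Applying the upper sign-continuity of $\Phi_x$ to the pair $(x,y')$ then gives $\sup_{x^*\in\Phi_x(x)}\langle x^*, y'-x\rangle\ge 0$, and since $\Phi_x(x)$ is $w^*$-compact this supremum is attained at some $\bar x^*\in\Phi_x(x)\subseteq {\rm conv}T(x)$.

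Finally I would transfer this back from the convex hull to $T$ itself. Writing the attaining point as a finite convex combination $\bar x^*=\sum_{i}\lambda_i x_i^*$ with $x_i^*\in T(x)$, the inequality $\langle \bar x^*, y'-x\rangle=\sum_i\lambda_i\langle x_i^*, y'-x\rangle\ge 0$ forces $\langle x_{i_0}^*, y'-x\rangle\ge 0$ for at least one index $i_0$. Rescaling by $1-t_0>0$ gives $\langle x_{i_0}^*, y-x\rangle\ge 0$ with $x_{i_0}^*\in T(x)$, which is exactly what is needed. As $y\in K$ was arbitrary, $x\in S^w(T,K)$, completing the argument. The two points requiring care are the localisation to $y'\in V$ (so that the hypotheses on $\Phi_x$ become available) and the passage from ${\rm conv}T(x)$ back to $T(x)$, which is handled by the convex-combination extraction.
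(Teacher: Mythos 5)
Your argument is correct and follows essentially the same route as the paper's own proof: localise via the Minty neighbourhood and the neighbourhood furnished by local upper sign-continuity of ${\rm conv}T$, pull the point $y$ back to a point $y'$ (the paper's $y_1$) on the segment $[x,y]$ inside that neighbourhood, run the segment/upper-sign-continuity argument for $\Phi_x$, use $w^*$-compactness of $\Phi_x(x)$ to attain the supremum, and finally extract a suitable element of $T(x)$ from the convex combination. If anything, your write-up is slightly more careful than the paper's at two points it passes over quickly, namely the rescaling between $y'-x$ and $y-x$ and the justification that the Minty inequality for $T$ transfers to $\Phi_x(z)\subseteq{\rm conv}T(z)$.
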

\begin{proof}
First   assume that $x\in LM(T,K),$  then  there exists a convex neighborhood $U_x$ of $x$ such that $x\in M(T,K\cap U_x).$ On the other hand,  by locally upper sign-continuity of ${\rm conv}T$ there exists a convex neighborhood $V_x$ of $x,$ and an upper sign-continuous submap $\Phi_x:V_x\rightrightarrows X^*$ with non-empty convex $w^*$-compact values satisfying
\[
\Phi_x(v)\subseteq {\rm conv}T(v)\setminus\{0\},  \,  \textrm{  for any  } \, v\in V_x.
\]
 Hence, $x\in M(T,K\cap U_x\cap V_x).$ Now, let $y$ be an element of $K,$ since $K \cap U_x\cap V_x $ is convex then   there exists $y_1\in [x,y]\cap U_x\cap V_x$ such that
 \[
 [x,y_1]\subseteq K\cap U_x\cap V_x.
  \]
Thus  one has
\[
\left\langle {{z^*},z - x} \right\rangle  \ge 0,\, \, \textrm{  for all }  \,\, z\in [x,y_1]\, \textrm{  and } \, z^*\in\Phi_x(z).
\]
 Hence
\begin{equation*}
\mathop {\inf }\limits_{z \in [x,{y_1}]} \mathop {\inf }\limits_{{z^*} \in {\Phi _x}(z)} \left\langle {{z^*},z - x} \right\rangle  \ge 0.
\end{equation*}
Upper sign-continuity of $\Phi_x$ implies that $$\mathop {\sup }\limits_{{x^*} \in {\Phi _x}(x)} \left\langle {{x^*},z - x} \right\rangle  \ge 0,\,\,\,\,\,\,\,\forall z \in [x,{y_1}].$$
Now, for $z=y_1$ one can obtain $$\mathop {\sup }\limits_{{x^*} \in {\Phi _x}(x)} \left\langle {{x^*},{y_1} - x} \right\rangle  \ge 0.$$  Since $\Phi_x(x)$ is compact, there exists $x^*_y\in\Phi_x(x)\subseteq {\rm conv}T(x)\setminus\{0\}$ such that
\[
\left\langle {{x_y^*},{y_1} - x} \right\rangle  \ge 0.
\]
On the other hand,  there exists $0<t<1$  such that $y_1=tx+(1-t)y$ and therefore $\left\langle {{x_y^*},y - x} \right\rangle  \ge 0.$ Now since $x^*_y\in {\rm conv}T(x),$  then  there exists   $0\le t_i\le 1$ such that
\[
x_y^* = \sum\limits_{i = 1}^n {{t_i}x_{iy}^*} ,\,\,\,\,\,\sum\limits_{i = 1}^n {{t_i} = 1,\,\,\,} \,\,x_{iy}^* \in T(x).
\]
This implies
 $$\left\langle {\sum\limits_{i = 1}^n {{t_i}x_{iy}^*} ,y - x} \right\rangle  = \sum\limits_{i = 1}^n {{t_i}\left\langle {x_{iy}^*,y - x} \right\rangle }  \ge 0.$$
      Therefore there exists $0\le j\le 1$ such that $\left\langle {x_{jy}^*,y - x} \right\rangle \ge 0$ and so one has $x\in S^{w}(T,K).$
\end{proof}

\begin{prop}\cite{AN}\label{p1}
Let $K$ be a nonempty, convex subset of the topological vector space $X$ and let $T:X\rightrightarrows X^*$ be quasimonotone  and is not properly quasimonotone. Then one has $LM(T,K)\ne\emptyset.$
\end{prop}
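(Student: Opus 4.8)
The plan is to exploit the failure of proper quasimonotonicity to produce an explicit candidate local Minty solution, and then to use quasimonotonicity together with a limiting argument to verify it. Since $T$ is not properly quasimonotone on $K$, there exist points $x_1,\dots,x_n\in K$ and $\bar x\in{\rm conv}\{x_1,\dots,x_n\}$ such that for each $i\in\{1,\dots,n\}$ one can select $x_i^*\in T(x_i)$ with $\langle x_i^*,x_i-\bar x\rangle<0$. I would write $\bar x=\sum_{i=1}^n\lambda_i x_i$ with $\lambda_i\ge 0$ and $\sum_{i=1}^n\lambda_i=1$; note $\bar x\in K$ by convexity of $K$. The claim to be proved is that $\bar x\in LM(T,K)$.

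First I would argue by contradiction, assuming $\bar x\notin LM(T,K)$. By the definition of a local Minty solution this means that for every neighbourhood $U$ of $\bar x$ one has $\bar x\notin M(T,K\cap U)$; hence there exist $y_U\in K\cap U$ and $y_U^*\in T(y_U)$ with $\langle y_U^*,y_U-\bar x\rangle<0$. Ordering the neighbourhoods of $\bar x$ by reverse inclusion, the points $(y_U)$ form a net with $y_U\to\bar x$ and $\langle y_U^*,\bar x-y_U\rangle>0$ for every $U$.

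The key step is the convex-combination identity $\bar x-y_U=\sum_{i=1}^n\lambda_i(x_i-y_U)$, which together with $\langle y_U^*,\bar x-y_U\rangle>0$ forces at least one index $i=i(U)$ with $\langle y_U^*,x_{i(U)}-y_U\rangle>0$. Applying quasimonotonicity of $T$ to the pair $(y_U,x_{i(U)})$ then yields $\langle w^*,x_{i(U)}-y_U\rangle\ge 0$ for every $w^*\in T(x_{i(U)})$, and in particular $\langle x_{i(U)}^*,x_{i(U)}-y_U\rangle\ge 0$ for the fixed functional $x_{i(U)}^*$ chosen above.

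Finally I would pass to the limit. Since the index set $\{1,\dots,n\}$ is finite, a pigeonhole argument over the directed set of neighbourhoods produces a single index $i_0$ for which $\{U:i(U)=i_0\}$ is cofinal; restricting to this cofinal subnet keeps $y_U\to\bar x$ while fixing $\langle x_{i_0}^*,x_{i_0}-y_U\rangle\ge 0$ along it. Because $x_{i_0}^*\in X^*$ is continuous, letting $y_U\to\bar x$ gives $\langle x_{i_0}^*,x_{i_0}-\bar x\rangle\ge 0$, contradicting $\langle x_{i_0}^*,x_{i_0}-\bar x\rangle<0$. This contradiction shows $\bar x\in LM(T,K)$, whence $LM(T,K)\ne\emptyset$. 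The main obstacle is the bookkeeping of the pigeonhole in a general topological vector space: the index must be selected \emph{cofinally} (so the resulting subnet still converges to $\bar x$) rather than merely infinitely often, which is precisely what makes the passage to the limit legitimate; everything else reduces to the continuity of $x_{i_0}^*$ and the convexity identity above.
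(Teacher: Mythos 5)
Your argument is correct, but note that the paper itself offers no proof of this proposition: it is imported verbatim from the cited reference of Aussel and Hadjisavvas, so the only meaningful comparison is with the argument there. You correctly negate proper quasimonotonicity to obtain $x_1,\dots,x_n\in K$, $\bar x=\sum_i\lambda_i x_i$ and $x_i^*\in T(x_i)$ with $\langle x_i^*,x_i-\bar x\rangle<0$, and your verification that $\bar x\in LM(T,K)$ is sound: the net $(y_U)$ indexed by neighbourhoods under reverse inclusion does converge to $\bar x$, the convex-combination identity does force an index with $\langle y_U^*,x_{i(U)}-y_U\rangle>0$, the application of quasimonotonicity to the pair $(y_U,x_{i(U)})$ is in the right orientation, and the cofinality argument for the pigeonhole is exactly what is needed to pass to the limit. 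The standard route, however, is direct rather than by contradiction: one sets $V=\bigcap_{i=1}^n\{z\in X:\langle x_i^*,z-x_i\rangle>0\}$, which is an open neighbourhood of $\bar x$ since each $x_i^*$ is continuous and $\langle x_i^*,\bar x-x_i\rangle>0$; for any $y\in K\cap V$ quasimonotonicity applied to the pair $(x_i,y)$ gives $\langle y^*,y-x_i\rangle\ge 0$ for every $y^*\in T(y)$, and summing against the $\lambda_i$ yields $\langle y^*,y-\bar x\rangle\ge 0$, i.e.\ $\bar x\in M(T,K\cap V)$ with the neighbourhood exhibited explicitly and no limiting argument at all. Your proof uses quasimonotonicity in the reverse orientation and pays for it with the net and subnet bookkeeping; if you wanted to keep the contradiction structure you could still avoid the pigeonhole entirely by observing that once $U\subseteq V$ the point $y_U$ lies in all $n$ half-spaces simultaneously, contradicting $\langle x_{i(U)}^*,x_{i(U)}-y_U\rangle\ge 0$ immediately.
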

We need the following Lemma in the sequel.
\begin{lem}\label{s}
Let $K$ be a weakly compact subset of $X.$ If $T:X\rightrightarrows X^*$ is quasimonotone, then $LM(T,K)\ne\emptyset.$
\end{lem}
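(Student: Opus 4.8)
The plan is to split the argument according to whether $T$ is properly quasimonotone. If $T$ is quasimonotone but \emph{not} properly quasimonotone, then Proposition \ref{p1} applies directly and yields $LM(T,K)\neq\emptyset$, so nothing further is needed in that case. The substance of the proof is therefore the complementary case, in which $T$ is properly quasimonotone; here I would in fact establish the stronger conclusion $M(T,K)\neq\emptyset$ and then invoke the inclusion $M(T,K)\subseteq LM(T,K)$ recorded just after the definition of local Minty solutions.

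To handle the properly quasimonotone case I would set up a KKM (Knaster--Kuratowski--Mazurkiewicz) argument on the weakly compact convex set $K$. For each $y\in K$ define
\[
M(y)=\Big\{x\in K:\ \langle y^*,y-x\rangle\ge 0\ \ \forall y^*\in T(y)\Big\},
\]
so that $M(T,K)=\bigcap_{y\in K}M(y)$. The first step is to observe that each $M(y)$ is weakly closed, being an intersection over $y^*\in T(y)$ of the weakly closed half-spaces $\{x:\langle y^*,x\rangle\le\langle y^*,y\rangle\}$; since $K$ is weakly compact, each $M(y)$ is then weakly compact as well.

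The key step is to verify the finite intersection property. Given $y_1,\dots,y_n\in K$, proper quasimonotonicity says precisely that for every sub-collection $\{y_{i_1},\dots,y_{i_k}\}$ and every $x\in\mathrm{conv}\{y_{i_1},\dots,y_{i_k}\}$ there is an index $i_j$ with $\langle y_{i_j}^*,y_{i_j}-x\rangle\ge 0$ for all $y_{i_j}^*\in T(y_{i_j})$; that is, $\mathrm{conv}\{y_{i_1},\dots,y_{i_k}\}\subseteq\bigcup_j M(y_{i_j})$. This is exactly the KKM covering condition for the family $\{M(y_i)\}_{i=1}^n$ relative to $\mathrm{conv}\{y_1,\dots,y_n\}$, so the classical finite-dimensional KKM lemma gives $\bigcap_{i=1}^n M(y_i)\neq\emptyset$. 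Having established the finite intersection property for the weakly compact family $\{M(y)\}_{y\in K}$, weak compactness of $K$ forces $\bigcap_{y\in K}M(y)=M(T,K)\neq\emptyset$, which completes the properly quasimonotone case.

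The main obstacle I anticipate is the clean extraction of the KKM covering condition from the definition of proper quasimonotonicity, together with the care needed to confirm that the restriction of $M(y)$ to each finite-dimensional simplex is closed (so that the finite KKM lemma applies) while $M(y)$ itself is weakly closed in all of $K$ (so that weak compactness can be used to pass from finite to arbitrary intersections). Convexity of $K$ is used implicitly to guarantee $\mathrm{conv}\{y_1,\dots,y_n\}\subseteq K$ and hence $M(y_i)\cap\mathrm{conv}\{y_1,\dots,y_n\}\subseteq M(y_i)$; this convexity should be assumed in the hypotheses alongside weak compactness, consistent with the standing setting of problem (VI) and with Proposition \ref{p1}.
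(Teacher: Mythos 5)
Your proof is correct and follows essentially the same route as the paper: the paper's proof is a one-line appeal to Proposition \ref{p1} for the non-properly-quasimonotone case and to Theorem 5.1 of \cite{DA} for the properly quasimonotone case, and your KKM argument for $M(T,K)\ne\emptyset$ is precisely the content of that cited theorem. Your remark that convexity of $K$ must be added to the hypotheses is apt, since both Proposition \ref{p1} and the KKM covering condition require it while the paper's statement of the lemma omits it.
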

\begin{proof}
The proof is straightforward by Theorem 5.1  in  \cite{DA} and Proposition \ref{p1}.
\end{proof}
The following Theorem is an  extension of Theorem 3.1 in \cite{Gk} without   coercivity, locally bounded and hemiclosed conditions  on $T$ and reflexivity of Banach space $X.$
\begin{thm}\label{th1}
Let $K$ be a nonempty convex subset of $X.$ Assume that $T:X\rightrightarrows X^*$ be a quasimonotone operator that is not properly quasimonotone. If ${\rm conv}T$ is locally upper sign-continuous, then  the variational inequality ${\rm (VI)}$ has a solution. If moreover, $K=X$ and for all $x\in K,$ $T(x)$ is weakly compact, then the generalized
equation $0 \in T(x)$ admits a solution.
\end{thm}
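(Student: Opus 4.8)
The plan is to manufacture a weak Stampacchia solution and then observe that such a point automatically solves $\mathrm{(VI)}$. Since $T$ is quasimonotone but not properly quasimonotone, Proposition \ref{p1} immediately gives $LM(T,K)\neq\emptyset$. Because $\mathrm{conv}\,T$ is locally upper sign-continuous, Lemma \ref{l2} applies and yields the inclusion $LM(T,K)\subseteq S^{w}(T,K)$. Chaining the two, there exists a point $\bar{x}\in S^{w}(T,K)$, and this is the candidate solution for everything that follows.

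Next I would show that every $\bar{x}\in S^{w}(T,K)$ solves $\mathrm{(VI)}$, which is essentially a matter of unwinding definitions. By definition of $S^{w}$, for each fixed $y\in K$ there is some $x^{*}\in T(\bar{x})$ with $\langle x^{*},y-\bar{x}\rangle\ge 0$, whence
\[
\sup_{x^{*}\in T(\bar{x})}\langle x^{*},y-\bar{x}\rangle\ge 0,\qquad \forall y\in K,
\]
which is exactly the formulation of $\mathrm{(VI)}$. This settles the first assertion.

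For the second assertion I specialize to $K=X$ and reuse the point $\bar{x}\in S^{w}(T,X)$ produced above. Since $y$ now ranges over all of $X$, writing $z=y-\bar{x}$ turns the weak Stampacchia property into the statement that for every $z\in X$ there exists $x^{*}\in T(\bar{x})$ with $\langle x^{*},z\rangle\ge 0$. I claim this forces $0\in T(\bar{x})$, and I would argue by contradiction. The hypothesis that $T(\bar{x})$ is weakly compact makes it weak-$*$ compact, since the weak-$*$ topology $\sigma(X^{*},X)$ is coarser than the weak topology $\sigma(X^{*},X^{**})$; together with convexity of the values this set is weak-$*$ closed and convex. If $0\notin T(\bar{x})$, the Hahn–Banach separation theorem in the locally convex space $(X^{*},\sigma(X^{*},X))$, whose topological dual is precisely $X$, separates $0$ from the weak-$*$ compact convex set $T(\bar{x})$: there is $z\in X$ with
\[
\sup_{x^{*}\in T(\bar{x})}\langle x^{*},z\rangle<\langle 0,z\rangle=0 .
\]
This contradicts the weak Stampacchia property for that particular $z$, so $0\in T(\bar{x})$, i.e. $\bar{x}$ solves $0\in T(x)$.

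The routine parts are the two inclusions supplied by Proposition \ref{p1} and Lemma \ref{l2} and the bookkeeping identifying $S^{w}$ with $\mathrm{(VI)}$. The delicate step is the separation in the last paragraph, and for two reasons. First, it must be carried out in the weak-$*$ topology so that the separating functional belongs to $X$ and is therefore an admissible direction $z=y-\bar{x}$ in the weak Stampacchia condition; separating in the norm or weak topology of $X^{*}$ would only produce an element of $X^{**}$, which is useless here. Second, one must ensure the conclusion lands in $T(\bar{x})$ itself and not merely in its weak-$*$ closed convex hull $\overline{\mathrm{conv}}^{\,w^{*}}T(\bar{x})$ (which is weak-$*$ compact by the Krein--\v{S}mulian theorem and through which the separation most naturally runs). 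It is exactly the weak compactness, giving weak-$*$ closedness, together with convexity of the values that collapses this hull back to $T(\bar{x})$; convexity of $T(\bar{x})$ cannot be dropped, as the constant map $T\equiv\{-e_{1},e_{1}\}$ on $X=\mathbb{R}^{2}$ already shows, being quasimonotone, not properly quasimonotone, with $\mathrm{conv}\,T$ continuous, yet $0\notin T(x)$ for every $x$.
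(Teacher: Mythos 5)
Your argument follows the paper's own route almost step for step: Proposition \ref{p1} gives $LM(T,K)\neq\emptyset$, Lemma \ref{l2} upgrades this to $S^{w}(T,K)\neq\emptyset$, Remark \ref{re1} (your ``unwinding of definitions'') identifies such a point as a solution of (VI), and the zero of $T$ is then produced by separating $0$ from $T(\bar x)$. You are in fact more careful than the paper on the separation step: you correctly insist that it be carried out in the $\sigma(X^{*},X)$ topology so that the separating functional is an admissible element of $X$. The one substantive issue is that your separation requires $T(\bar x)$ to be convex, which is not among the hypotheses of the theorem; the paper's proof has exactly the same defect (it silently refers to ``the closed convex set $T(x)$''), and without convexity the argument only yields $0\in\overline{\mathrm{conv}}\,T(\bar x)$. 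You deserve credit for flagging this explicitly, but the counterexample you offer to show that convexity cannot be dropped does not work: the constant map $T\equiv\{-e_{1},e_{1}\}$ on $\mathbb{R}^{2}$ is not quasimonotone (take $x=0$, $y=e_{1}$: then $e_{1}\in T(x)$ gives $\langle e_{1},y-x\rangle>0$, yet $-e_{1}\in T(y)$ gives $\langle -e_{1},y-x\rangle<0$), so it fails the theorem's hypotheses and leaves open whether the statement is actually false for nonconvex values.
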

\begin{proof}
By Lemma \ref{l2} and Proposition \ref{p1} and Remark \ref{re1} it is easy    to cheek   the existence of solution
 ${\rm (VI)}.$ Now, let $\bar{x}$ be a solution of variational inequality ${\rm (VI)}.$ Since $T(\bar{x})$ is weakly compact for $y\in K$ there exists $x^*_y\in T(\bar{x})$ such that
 \[
 0 \le \mathop {\sup }\limits_{{x^*} \in T(\overline x )} {\mkern 1mu} {\mkern 1mu} {\mkern 1mu} \left\langle {{x^*},y - {\rm{ }}\overline x } \right\rangle  = \left\langle {x_y^*,y - {\rm{ }}\overline x } \right\rangle.
  \]
  This means that $\{0\}$ cannot be strongly separated from the closed convex set $T(x)$
and therefore, $0 \in T(x).$
\end{proof}
In the case that  operator $T$ is  properly quasimonotone, we can not use  the Proposition \ref{th1}.  In order to overcome this flaw, under the weaker condition of Theorem 2.1 of \cite{AN} one can get the following result without any coercivity  condition.
\begin{thm}
Let $K,U$ be  nonempty convex subsets of $X$ and $K\cap {\rm cor}U$ be nonempty and weakly compact. Further, let $T:X\rightrightarrows X^*$ be a quasimonotone operator on $K.$  If $T$ is locally upper
sign-continuous, then  the variational inequality ${\rm (VI)}$ has a solution.
\end{thm}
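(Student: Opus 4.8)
The plan is to reduce the problem to the weakly compact convex set $W:=K\cap {\rm cor}U$, to solve the variational inequality there first, and then to upgrade a solution on $W$ to a solution on all of $K$ by exploiting that such a solution lies in ${\rm cor}U$. First I would observe that $W$ is convex (being the intersection of the convex set $K$ with the convex set ${\rm cor}U$) and, by hypothesis, nonempty and weakly compact, and that $T$ remains quasimonotone on $W$ since $W\subseteq K$. Applying Lemma \ref{s} to $W$ then yields $LM(T,W)\ne\emptyset$. Since $T$ is locally upper sign-continuous, Proposition \ref{p2} gives that ${\rm conv}T$ is locally upper sign-continuous, so Lemma \ref{l2} applied on $W$ gives $LM(T,W)\subseteq S^w(T,W)$. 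Hence there exists $\bar{x}\in S^w(T,W)$; in particular $\bar{x}\in W\subseteq K$ and for every $y'\in W$ there is $x^*\in T(\bar{x})$ with $\langle x^*, y'-\bar{x}\rangle\ge 0$.

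The second step is the extension from $W$ to $K$. Fix an arbitrary $y\in K$ and set $y_t=(1-t)\bar{x}+ty$ for $t\in(0,1)$. Since $\bar{x},y\in K$ and $K$ is convex, $y_t\in K$ for all such $t$. Because $\bar{x}\in {\rm cor}U$, the definition of the algebraic interior provides a $\lambda_0>0$ with $b:=\bar{x}+\tfrac{\lambda_0}{2}(y-\bar{x})\in U$, and the line-segment principle (Lemma 1.9 in \cite{JJ}) then yields $[\bar{x},b)\subseteq {\rm cor}U$. For $t$ small enough $y_t$ lies on this half-open segment, so $y_t\in {\rm cor}U$ and therefore $y_t\in W$. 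Applying the weak Stampacchia property of $\bar{x}$ to $y_t\in W$ produces $x^*\in T(\bar{x})$ with $\langle x^*, y_t-\bar{x}\rangle\ge 0$; since $y_t-\bar{x}=t(y-\bar{x})$ with $t>0$, this gives $\langle x^*, y-\bar{x}\rangle\ge 0$, whence
\[
\sup_{x^*\in T(\bar{x})}\langle x^*, y-\bar{x}\rangle\ge 0.
\]
As $y\in K$ was arbitrary, $\bar{x}$ solves ${\rm (VI)}$, which together with Remark \ref{re1} completes the argument.

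The main obstacle I anticipate is the extension step, specifically the verification that $y_t\in {\rm cor}U$ for small $t$: the defining property of ${\rm cor}U$ only guarantees that a small move from $\bar{x}$ in the direction $y-\bar{x}$ stays in $U$, whereas the argument requires the stronger statement that it stays in ${\rm cor}U$, and $y$ itself need not belong to $U$. This is exactly where the convexity of $U$ enters through the segment principle $[\bar{x},b)\subseteq{\rm cor}U$; without it the reduction to $W$ could not be reversed. A secondary point to check is that $T$ being merely locally upper sign-continuous (rather than ${\rm conv}T$) suffices, which is handled by invoking Proposition \ref{p2} before applying Lemma \ref{l2}.
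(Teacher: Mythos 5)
Your proof is correct and follows essentially the same route as the paper's: obtain a local Minty solution on the weakly compact convex set $K\cap{\rm cor}U$ via Lemma \ref{s}, convert it to a Stampacchia-type solution there, and extend to all of $K$ by moving along segments from $\bar{x}$ that enter ${\rm cor}U$. Your write-up is in fact more careful than the paper's, which opens with an unnecessary ``suppose $T$ is properly quasimonotone,'' skips the explicit passage through Proposition \ref{p2} and Lemma \ref{l2}, and asserts a single $x_0^*$ valid for all $y$ where only the weak Stampacchia form --- which is all that ${\rm (VI)}$ requires --- is actually justified.
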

\begin{proof}
Suppose that $T$ be properly quasimonotone,  hence by Lemma \ref{s} one has $LM(T,K\cap{\rm cor}U)\ne\emptyset.$ Choose $x_0\in LM(T,K\cap{\rm cor}U),$  then

\[\exists \,  x^{*}_0 \in T({x^{}}_0),\,\,\,\,\,\forall y \in K\cap {corU} ;\,\,\,\,\,\,\,\left\langle {x^{*}_0,y - {x^{}}_0} \right\rangle  \ge 0.\]
Now for every $z\in K$ there exists $t>0$ such that
\[
 x_0+t (z-x_0)\in K\cap {\rm cor}U,
 \]
 which implies that $\left\langle {x^{*}_{0},z - {x^{}}_0} \right\rangle  \ge 0.$  Therefore $x_0\in S(T,K)$ which completes the proof.
\end{proof}
\begin{lem}
Let $K$ be a convex subset of $X$ with ${\rm cor}K\ne\emptyset$ and  the set valued map  $T:X\rightrightarrows X^*$ be  quasimonotone and weakly dually lower semicontinuous on $K.$ If $S(T,K)\nsubseteq M(T,K),$ then the generalized
equation $0 \in T(x)$ admits a solution.

\end{lem}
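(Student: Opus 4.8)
The plan is to argue by contradiction on the witness functional attached to a Stampacchia solution that fails to be a Minty solution. Since $S(T,K)\nsubseteq M(T,K)$, I would fix $\bar{x}\in S(T,K)$ with $\bar{x}\notin M(T,K)$. Membership in $S(T,K)$ supplies some $\bar{x}^*\in T(\bar{x})$ with $\langle \bar{x}^*,y-\bar{x}\rangle\ge 0$ for every $y\in K$, while the failure of the Minty property supplies $y_0\in K$ and $y_0^*\in T(y_0)$ with $\langle y_0^*,y_0-\bar{x}\rangle<0$. The whole point is that the first piece of data forces $\bar{x}^*=0$, i.e. $0\in T(\bar{x})$, which is precisely a solution of the generalized equation. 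So I would assume $\bar{x}^*\ne 0$ and aim for a contradiction.

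Under the assumption $\bar{x}^*\ne 0$, I would first upgrade the non-strict Stampacchia inequality to a strict one on the algebraic interior. Applying Lemma \ref{z} with $A=K$, the nonzero functional $\bar{x}^*$, and base point $\bar{x}$ gives
\[
\langle \bar{x}^*, z-\bar{x}\rangle>0 \qquad \text{for all } z\in {\rm cor}K.
\]
Now quasimonotonicity enters: for each $z\in{\rm cor}K$ the element $\bar{x}^*\in T(\bar{x})$ already satisfies $\langle \bar{x}^*,z-\bar{x}\rangle>0$, so the defining implication of quasimonotonicity (with the roles $x=\bar{x}$, $y=z$) yields $\langle z^*,z-\bar{x}\rangle\ge 0$ for every $z^*\in T(z)$, hence
\[
\inf_{z^*\in T(z)}\langle z^*, z-\bar{x}\rangle\ge 0 \qquad \text{for all } z\in {\rm cor}K.
\]

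The remaining step, which I expect to be the crux, is to propagate this infimum inequality from ${\rm cor}K$ to all of $K$ using the weak density furnished by Lemma \ref{l1} together with weak dual lower semicontinuity. Fix an arbitrary $y\in K$. Since $K$ is convex with ${\rm cor}K\ne\emptyset$, Lemma \ref{l1} gives $y\in K\subseteq {\rm cl_w}K={\rm cl_w}({\rm cor}K)$, so there is a net $(z_\alpha)\subseteq {\rm cor}K$ with $z_\alpha\rightharpoonup y$. The previous display gives $\inf_{z_\alpha^*\in T(z_\alpha)}\langle z_\alpha^*, z_\alpha-\bar{x}\rangle\ge 0$ for every $\alpha$, hence $\limsup_\alpha \inf_{z_\alpha^*\in T(z_\alpha)}\langle z_\alpha^*, z_\alpha-\bar{x}\rangle\ge 0$. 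Applying weak dual lower semicontinuity of $T$ with base point $\bar{x}\in K$ and the net $z_\alpha\rightharpoonup y$ then yields $\inf_{y^*\in T(y)}\langle y^*, y-\bar{x}\rangle\ge 0$. As $y\in K$ was arbitrary, this is exactly the statement $\langle y^*,y-\bar{x}\rangle\ge 0$ for all $y\in K$ and all $y^*\in T(y)$, i.e. $\bar{x}\in M(T,K)$, contradicting the choice of $\bar{x}$. Therefore $\bar{x}^*=0$, so $0\in T(\bar{x})$ and the generalized equation $0\in T(x)$ admits a solution. The delicate points to get right are the orientation of the inequality when invoking Lemma \ref{z} and verifying that the approximating net stays inside $K$ so that both quasimonotonicity and the semicontinuity hypothesis legitimately apply; the genuine work is concentrated in the density-plus-continuity passage, since that is where the hypotheses ${\rm cor}K\ne\emptyset$ and weak dual lower semicontinuity become indispensable.
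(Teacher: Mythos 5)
Your proposal is correct and follows essentially the same route as the paper: upgrade the Stampacchia inequality to a strict one on ${\rm cor}K$ via Lemma \ref{z}, pass to all of $T(z)$ by quasimonotonicity, and extend from ${\rm cor}K$ to $K$ using the weak density of Lemma \ref{l1} together with weak dual lower semicontinuity. The only (cosmetic) difference is that you run the contradiction on the single witness functional $\bar{x}^*$ rather than on the global hypothesis $0\notin T(x)$ for all $x$, which in fact reads more cleanly than the paper's version, whose final line contains a typo ($\nsubseteq$ where $\subseteq$ is meant).
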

\begin{proof}
Suppose that for all $x\in X$ we have $0\notin T(x),$ and
 $x\in S(T,K)$ be given. Hence,  there exists $x^*\in T(x)$  such that
  \[
  \langle {x^*},y - x \rangle \ge 0,\, \textrm{  for all} \, y\in K, \, \textrm{ and } \, x^*\ne 0.
   \]
   By Lemma \ref{z} it follows that $$\langle {x^*},z - x\rangle  > 0,\,\,\,\,\,\,\,\,\,\,\,\forall z \in {\rm cor}K.$$ For  any $y\in K$ by Lemma \ref{l1} there exists net $y_{\alpha}\in {\rm cor}K$ such that $y_{\alpha}\rightharpoonup y.$ Consequently,  for any $\alpha,$ $\langle {x^*},y_{\alpha} - x \rangle > 0$ and thus by quasimonotonicity,
    \[
    \langle {y_{\alpha}^*},y_{\alpha} - x \rangle \geq 0, \, \, \textrm{ for all}\, \, {y_{\alpha}^*}\in T(y_{\alpha}).
    \]
     Finally,  by weakly dually lower semicontinuity at $T$ one has $\langle y^*,y - x \rangle \geq 0$ for each $y^*\in T(y).$ The later indicates that  $x\in M(T,K),$ therefore
     \[
     S(T,K)\nsubseteq M(T,K).
     \]

\end{proof}
\begin{rem}
It is worth to note that the condition $(D)$ or (4.1)  in \cite{HH}  on the set $K\subseteq X$   is equivalent  to $M(T,K)\ne\emptyset.$ Also if ${\rm int}K\ne\emptyset$ in condition $D$ then $LM(T,K)\ne\emptyset.$
\end{rem}
The proof of the following Propositions (\ref{s1}) and (\ref{s2})  are straightforward.
\begin{prop}\label{s1}
Assume that $T:X\rightrightarrows X^*$ is an upper sign-continuous set-valued on $K$ whose values are convex and compact sets. If $M(T,K)\ne\emptyset,$ then the variational inequality $(VI)$ has a solution.
\end{prop}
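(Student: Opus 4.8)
\emph{The plan is to} take a Minty point $\bar x\in M(T,K)$ and show that it already solves $(VI)$; in fact I will produce a single $\bar x^*\in T(\bar x)$ with $\langle \bar x^*,y-\bar x\rangle\ge 0$ for every $y\in K$, that is, a Stampacchia solution, and then conclude by Remark \ref{re1}. The argument splits into two stages: first an ``along the segment'' step, using the Minty property together with upper sign-continuity, that yields the weak, $y$-by-$y$ inequality $\sup_{x^*\in T(\bar x)}\langle x^*,y-\bar x\rangle\ge 0$; and then a minimax / finite-intersection step, which uses the convex $w^*$-compact values of $T$ to glue these inequalities into one multiplier.

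\emph{For the first stage}, fix $y\in K$ and set $x_t=t\bar x+(1-t)y$ for $t\in(0,1)$, which lies in $K$ by convexity. Since $x_t-\bar x=(1-t)(y-\bar x)$ with $1-t>0$, the Minty property of $\bar x$ evaluated at the point $x_t$ gives $\langle x_t^*,y-\bar x\rangle\ge 0$ for every $x_t^*\in T(x_t)$, whence
\[
\inf_{x_t^*\in T(x_t)}\langle x_t^*,y-\bar x\rangle\ge 0\qquad\text{for all } t\in(0,1).
\]
This is exactly the hypothesis of upper sign-continuity of $T$ with base point $\bar x$, whose conclusion is $\sup_{x^*\in T(\bar x)}\langle x^*,y-\bar x\rangle\ge 0$. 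As $y\in K$ was arbitrary, and as $T(\bar x)$ is compact so that the supremum is attained, this already puts $\bar x$ in $S^w(T,K)$ and furnishes a solution of $(VI)$ in its stated form.

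\emph{The second, harder stage} upgrades this to a single multiplier. For each $y\in K$ set $C_y=\{x^*\in T(\bar x):\langle x^*,y-\bar x\rangle\ge 0\}$; since $x^*\mapsto\langle x^*,y-\bar x\rangle$ is $w^*$-continuous, each $C_y$ is a $w^*$-closed, hence $w^*$-compact, subset of $T(\bar x)$, and it is nonempty by the first stage. A Stampacchia solution is precisely a point of $\bigcap_{y\in K}C_y$, so by $w^*$-compactness it suffices to establish the finite-intersection property. Given $y_1,\dots,y_n\in K$, I would apply a minimax theorem (von Neumann / Sion) to the bilinear form $F(x^*,\lambda)=\sum_{i=1}^n\lambda_i\langle x^*,y_i-\bar x\rangle=\langle x^*,y_\lambda-\bar x\rangle$ on the convex $w^*$-compact set $T(\bar x)$ and the simplex $\Delta=\{\lambda\ge 0:\sum_{i=1}^n\lambda_i=1\}$, where $y_\lambda=\sum_{i=1}^n\lambda_i y_i\in K$ by convexity. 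For each fixed $\lambda$ the first stage applied to $y_\lambda$ gives $\max_{x^*\in T(\bar x)}F(x^*,\lambda)\ge 0$, so $\min_\lambda\max_{x^*}F\ge 0$; the minimax identity then produces $\bar x^*\in T(\bar x)$ with $\min_\lambda F(\bar x^*,\lambda)\ge 0$, and evaluating at the vertices $\lambda=e_i$ gives $\langle \bar x^*,y_i-\bar x\rangle\ge 0$ for every $i$, i.e. $\bar x^*\in\bigcap_{i=1}^n C_{y_i}$. Hence $\bigcap_{y\in K}C_y\ne\emptyset$, the point $\bar x$ is a Stampacchia solution, and Remark \ref{re1} completes the proof.

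\emph{The main obstacle} is exactly this minimax / finite-intersection passage. Upper sign-continuity only delivers the inequality for one $y$ at a time, and manufacturing a single $\bar x^*\in T(\bar x)$ that works simultaneously for all $y\in K$ is where convexity and $w^*$-compactness of the values are indispensable: convexity and compactness of $T(\bar x)$ make $F$ a bilinear form on a compact convex set, so the minimax equality is available, while compactness also underlies both the attainment of the suprema in the first stage and the finite-intersection argument. The remaining points — that $C_y$ is $w^*$-closed and that the minimax hypotheses hold — are routine.
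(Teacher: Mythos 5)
Your proof is correct, and it in fact contains more than the proposition asks for. The paper supplies no argument at all (it declares Propositions \ref{s1} and \ref{s2} ``straightforward''), and the intended straightforward proof is exactly your first stage: for $\bar x\in M(T,K)$ and $x_t=t\bar x+(1-t)y\in K$, the Minty inequality gives $\langle x_t^*,x_t-\bar x\rangle=(1-t)\langle x_t^*,y-\bar x\rangle\ge 0$ for every $x_t^*\in T(x_t)$, and upper sign-continuity with base point $\bar x$ then yields $\sup_{x^*\in T(\bar x)}\langle x^*,y-\bar x\rangle\ge 0$ for each $y\in K$; since ${\rm (VI)}$ is stated in precisely this supremum form, the proof is complete at that point. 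Note that this stage uses neither convexity nor compactness of the values --- compactness is needed only to attain the supremum, i.e.\ to conclude $\bar x\in S^w(T,K)$. Your second stage is a correct but strictly optional strengthening: applying Sion's minimax theorem to the bilinear form $F(x^*,\lambda)=\langle x^*,y_\lambda-\bar x\rangle$ on $T(\bar x)\times\Delta$ and then the finite-intersection property of the $w^*$-closed sets $C_y\subseteq T(\bar x)$, you manufacture a single $\bar x^*\in T(\bar x)$ valid for all $y\in K$ simultaneously, i.e.\ you show $M(T,K)\ne\emptyset$ implies $S(T,K)\ne\emptyset$, after which Remark \ref{re1} gives ${\rm (VI)}$ again. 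This exceeds what the proposition asserts, but it is the only place where the hypotheses of convex and compact values genuinely enter, so it arguably explains why those hypotheses appear in the statement; if you keep it, just make explicit that the compactness of $T(\bar x)$ is taken in a topology making $x^*\mapsto\langle x^*,z\rangle$ continuous (norm or $w^*$, either of which suffices here).
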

\begin{prop}\label{s2}
Assume that $T:X\rightrightarrows X^*$ is a  locally upper sign-continuous set-valued on $K$ whose values are convex and $w^*$-compact sets. If $M(T,K)\ne\emptyset,$ then the generalization $0\in T(x)$ has a solution.
\end{prop}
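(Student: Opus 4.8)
The plan is to combine the Minty-lemma mechanism behind Proposition \ref{s1} with the Hahn--Banach separation used at the end of Theorem \ref{th1}. The guiding idea is that local upper sign-continuity lets one replace $T$, in a neighbourhood of a Minty point, by a well-behaved convex-valued submap to which the argument of Proposition \ref{s1} applies; once the point is shown to solve $(\mathrm{VI})$ for $T$ along enough directions, the convexity and $w^*$-compactness of the value set force $0$ into it.

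First I would fix $\o x\in M(T,K)$, nonempty by hypothesis, and invoke local upper sign-continuity of $T$ at $\o x$: there is a convex neighbourhood $U$ of $\o x$ and an upper sign-continuous submap $\Phi_{\o x}:U\rightrightarrows X^*$ with nonempty convex $w^*$-compact values satisfying $\Phi_{\o x}(u)\subseteq T(u)\setminus\{0\}$ for all $u\in U$. Since $\Phi_{\o x}(u)\subseteq T(u)$ and $\o x$ is a Minty solution for $T$ on $K$, it is \emph{a fortiori} a Minty solution for $\Phi_{\o x}$ on the convex set $K\cap U$, i.e. $\o x\in M(\Phi_{\o x},K\cap U)$. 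Applying the reasoning of Proposition \ref{s1} to the upper sign-continuous, convex $w^*$-compact valued map $\Phi_{\o x}$ on $K\cap U$, the Minty solution $\o x$ solves the variational inequality for $\Phi_{\o x}$, so
\[
\sup_{x^*\in\Phi_{\o x}(\o x)}\langle x^*,y-\o x\rangle\ge 0,\qquad\forall\,y\in K\cap U.
\]
Because $\Phi_{\o x}(\o x)\subseteq T(\o x)$, enlarging the set over which the supremum is taken only increases it, so the same inequality holds with $T(\o x)$ in place of $\Phi_{\o x}(\o x)$; hence $\o x$ solves $(\mathrm{VI})$ for $T$ along the directions $y-\o x$, $y\in K\cap U$.

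Finally I would conclude $0\in T(\o x)$ by separation, exactly as in Theorem \ref{th1}. Suppose $0\notin T(\o x)$. Since $T(\o x)$ is convex and $w^*$-compact and the topological dual of $(X^*,w^*)$ is $X$, there is $v\in X$ strongly separating $0$ from $T(\o x)$, say $\sup_{x^*\in T(\o x)}\langle x^*,v\rangle<0$. As $U$ is a neighbourhood of $\o x$, for small $t>0$ the point $y=\o x+tv$ lies in $K\cap U$ (here one uses that $\o x$ is internal to $K$, e.g. $K=X$ or $\o x\in{\rm cor}K$), and the inequality above forces $\sup_{x^*\in T(\o x)}\langle x^*,v\rangle\ge 0$, a contradiction. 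Therefore $0\in T(\o x)$, solving the generalized equation.

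I expect the real obstacle to be this last step. One must guarantee that the variational inequality is available along a full cone of directions around $\o x$ --- which is precisely where the neighbourhood $U$ supplied by local upper sign-continuity, combined with internality of $\o x$ in $K$, is needed --- and one must use the convexity and $w^*$-compactness of $T(\o x)$ to obtain a \emph{strict} separation, since only strictness produces the contradiction. The passage from $\Phi_{\o x}$ back to $T$ is harmless: although $0\notin\Phi_{\o x}(\o x)$ by construction, the supremum inequality for $T$ is weaker, and it is $T(\o x)$ (not the punctured submap) from which $0$ cannot be strongly separated.
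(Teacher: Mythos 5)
The paper gives no proof of Proposition \ref{s2}; it is simply declared straightforward alongside Proposition \ref{s1}, so there is nothing to compare line by line. Your argument is exactly the intended one: run the Minty-to-Stampacchia mechanism of Proposition \ref{s1} on the upper sign-continuous submap $\Phi_{\bar x}$ furnished by local upper sign-continuity, pass from the supremum over $\Phi_{\bar x}(\bar x)$ to the supremum over the larger set $T(\bar x)$, and then strongly separate the convex $w^*$-compact set $T(\bar x)$ from the origin as at the end of Theorem \ref{th1}. Each of these steps is sound, including your remark that the separation must be carried out on $T(\bar x)$ and not on the punctured submap, from which $0$ is excluded by construction.

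The point you relegate to a parenthesis --- that $y=\bar x+tv$ must lie in $K$ for small $t>0$, i.e.\ that the Minty point is internal to $K$ --- is not a removable technicality but a genuinely missing hypothesis, and it is the only real gap. Without it the proposition is false as stated: take $X=\mathbb{R}$, $K=[0,1]$ and $T\equiv\{1\}$; then $T$ is locally upper sign-continuous with convex compact values and $M(T,K)=\{0\}\ne\emptyset$, yet $0\notin T(x)$ for every $x$. The neighbourhood $U$ only gives you directions $y-\bar x$ with $y\in K\cap U$, and if $\bar x$ sits on the boundary of $K$ these directions need not fill a neighbourhood of the origin, so the separation step cannot be contradicted. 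Your proof is therefore correct precisely under the additional assumption you quietly insert ($K=X$, $K$ algebraically open, or more generally $M(T,K)\cap{\rm cor}K\ne\emptyset$); that assumption should be stated explicitly, and its absence is a defect of the proposition itself rather than of your argument.
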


\begin{lem}
Let $K$ be  an  algebraically open set in $X.$ Then one has $ S(T,K) \subseteq Z_T. $ Here  $Z_T$ is the set all zeros of $T,$ {\rm i.e} $Z_T=\{x\in X:~~~0\in T(x)\}.$
\end{lem}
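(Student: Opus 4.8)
The plan is to fix a Stampacchia solution $x \in S(T,K)$ and show that the functional witnessing membership in $S(T,K)$ is forced to vanish. By definition there exists $x^* \in T(x)$ with $\langle x^*, y - x\rangle \ge 0$ for all $y \in K$. If I can prove $x^* = 0$, then $0 \in T(x)$, i.e. $x \in Z_T$; since $x$ is an arbitrary element of $S(T,K)$, this gives the inclusion $S(T,K) \subseteq Z_T$.

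First I would argue by contradiction, assuming $x^* \ne 0$. The defining inequality $\langle x^*, y - x\rangle \ge 0$ for all $y \in K$ is precisely the hypothesis of Lemma \ref{z}, applied with the set $A = K$ and with $x$ in the role of the base point (called ``$y$'' in that lemma). Invoking Lemma \ref{z} then yields the strict inequality $\langle x^*, y - x\rangle > 0$ for every $y \in {\rm cor}K$.

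The decisive step is to use the hypothesis that $K$ is algebraically open, which means exactly $K = {\rm cor}K$. In particular $x \in K = {\rm cor}K$, so I may take $y = x$ in the strict inequality just obtained, giving $0 = \langle x^*, x - x\rangle > 0$, a contradiction. Hence $x^* = 0$ and therefore $0 \in T(x)$, as required.

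Alternatively, and without appealing to Lemma \ref{z}, the same conclusion follows by a direct directional argument that I would present if a self-contained proof is preferred: for any $z \in X$, algebraic openness furnishes $\lambda > 0$ with $x + tz \in K$ for all $t \in [0,\lambda)$; choosing $y = x + tz$ with $0 < t < \lambda$ gives $t\langle x^*, z\rangle = \langle x^*, y - x\rangle \ge 0$, hence $\langle x^*, z\rangle \ge 0$, and replacing $z$ by $-z$ forces $\langle x^*, z\rangle = 0$ for every $z \in X$, i.e. $x^* = 0$. I do not expect a genuine obstacle in either route; the only points requiring care are the correct matching of base point and variable when quoting Lemma \ref{z}, and the explicit use of the fact that ``algebraically open'' means $K = {\rm cor}K$, so that the solution $x$ itself belongs to ${\rm cor}K$.
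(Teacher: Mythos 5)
Your proposal is correct, and your ``alternative'' direct directional argument is precisely the paper's own proof: use $x\in{\rm cor}K$ to test with $y=x+tz$ for arbitrary $z\in X$ and small $t>0$, conclude $\langle x^*,z\rangle\ge 0$ for all $z$, hence $x^*=0$. Your primary route via Lemma \ref{z} (strict inequality on ${\rm cor}K=K$ contradicted at $y=x$) is just a repackaging of the same idea and is equally valid.
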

\begin{proof}
Suppose that $\bar{x}\in S(T,K),$ then there exists $x^*\in  T(\bar{x})$ such that for all $y\in K$ one has $\langle x^*,y-\bar{x}\rangle\ge 0$. Since $\bar{x}\in {\rm cor} K$ hence for given $x\in X$ one have $\bar{x}+tx\in K$ for some $t>0.$ So we have $\langle x^*,(\bar{x}+tx)-\bar{x}\rangle\ge 0$ which implies that $\langle x^*,x\rangle\ge 0$ and this means that $x^*=0$ thus $0\in T(\bar{x}).$
\end{proof}
\begin{prop}
Let $T$ be a quasimonotone operator, which ${\rm conv}T$  is lower semi-continuous at $x\in{\rm cor}K.$ Then

 \[(\forall x^*\in X^*\setminus\{0\},~~~T(x)\not  \subseteq  \mathbb{R_{++}}x^*)\Longleftrightarrow 0\in T(x).\]
\end{prop}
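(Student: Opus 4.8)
The goal is to prove the equivalence
\[
\bigl(\forall x^*\in X^*\setminus\{0\},\ T(x)\not\subseteq \mathbb{R_{++}}x^*\bigr)\Longleftrightarrow 0\in T(x),
\]
under the hypotheses that $T$ is quasimonotone and $\mathrm{conv}\,T$ is lower semi-continuous at $x\in\mathrm{cor}\,K$. My plan is to treat the two implications separately, handling the easy direction by a direct separation argument and reserving the genuine work for the forward direction.

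First I would dispatch the implication $(\Leftarrow)$, which I expect to be essentially formal. Suppose $0\in T(x)$. Then for every nonzero $x^*\in X^*$, the set $\mathbb{R_{++}}x^*=\{tx^*:t>0\}$ does not contain $0$, so the element $0\in T(x)$ cannot lie in $\mathbb{R_{++}}x^*$; hence $T(x)\not\subseteq\mathbb{R_{++}}x^*$. This contraposition-free argument needs only that $0\notin\mathbb{R_{++}}x^*$, which is immediate from the definition.

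The substantive direction is $(\Rightarrow)$, and the natural route is contraposition: assume $0\notin T(x)$ and produce a nonzero functional $x^*$ with $T(x)\subseteq\mathbb{R_{++}}x^*$. Because $0\notin T(x)$ and the values of $T$ (or at least of $\mathrm{conv}\,T$) should be taken convex, I would first separate $0$ from the closed convex hull of $T(x)$ by the Hahn--Banach theorem, obtaining a direction $z\in X$ with $\langle x^*,z\rangle>0$ for all $x^*\in T(x)$; this is where I would use that $x\in\mathrm{cor}\,K$, so that the separating direction $z$ can be realized as $x+tz\in K$ for small $t>0$, letting me feed geometric information back into the variational structure. The role of lower semi-continuity of $\mathrm{conv}\,T$ at $x$ is to pass this strict separation from nearby points into a statement at $x$ itself, and quasimonotonicity is what forces the separating functional to be aligned, in the sense that every element of $T(x)$ must be a \emph{positive} multiple of a single fixed $x^*$ rather than merely lying in an open halfspace.

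The hard part will be upgrading the halfspace containment $T(x)\subseteq\{y^*:\langle y^*,z\rangle>0\}$ to the much sharper ray containment $T(x)\subseteq\mathbb{R_{++}}x^*$, i.e.\ showing that all elements of $T(x)$ are collinear. Being contained in a ray is a rigid, one-dimensional condition, and a single separation only gives a halfspace; the collinearity must come from running the quasimonotonicity argument against \emph{every} admissible direction simultaneously and invoking the lower semi-continuity to rule out any transverse element of $T(x)$. Concretely, I would argue that if two non-parallel functionals belonged to $T(x)$, one could select a test direction along which the quasimonotone inequality is violated at points of $\mathrm{cor}\,K$ approaching $x$, contradicting the semicontinuity hypothesis. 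Closing this collinearity step cleanly is the crux; the separation and the $(\Leftarrow)$ direction are routine by comparison.
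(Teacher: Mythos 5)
Your skeleton matches the paper's: the $(\Leftarrow)$ direction is just the observation that $0\notin\mathbb{R_{++}}x^*$, and all the substance is in showing that $0\notin T(x)$, together with the ray hypothesis, leads to a contradiction. The problem is that the step you yourself call ``the crux'' --- forcing every element of $T(x)$ onto one open ray --- is the entire content of the proposition, and you leave it as a one-sentence sketch rather than an argument. For the record, the paper closes it as follows. Suppose $0\notin T(x)$ and pick $y^*\in T(x)$ (necessarily nonzero); by the hypothesis applied to $y^*$ there is $x^*\in T(x)$ with $x^*\ne\lambda y^*$ for every $\lambda>0$. Since both functionals are nonzero and $x^*$ is not a positive multiple of $y^*$, one can choose $w\in X$ with $\langle x^*,w\rangle>0>\langle y^*,w\rangle$. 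Because $x\in{\rm cor}K$, the points $x_n=x+\tfrac{1}{n}w$ lie in $K$ for $n$ large; then $\langle x^*,x_n-x\rangle>0$ and quasimonotonicity give $\langle y_n^*,w\rangle\ge 0$ for every $y_n^*\in{\rm conv}T(x_n)$, while lower semicontinuity of ${\rm conv}T$ at $x$ produces such $y_n^*$ converging to $y^*$, forcing $\langle y^*,w\rangle\ge 0$ --- a contradiction. This is precisely the ``test direction'' you gesture at, but without exhibiting $w$ and running the limit, the proposition is not proved.

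Two further points where your outline would get you into trouble. First, ``two non-parallel functionals'' is the wrong dichotomy: you must also handle the antiparallel case $x^*=\lambda y^*$ with $\lambda<0$, where the functionals are parallel yet still violate the ray containment; the choice of $w$ above covers it. Second, your opening move --- separating $0$ from the closed convex hull of $T(x)$ by Hahn--Banach --- is both unjustified and unnecessary: $0\notin T(x)$ does not imply $0\notin{\rm cl}({\rm conv}T(x))$ without further assumptions on the values of $T$, and the halfspace containment it would deliver plays no role in the argument that actually works.
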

\begin{proof}
Suppose in  the contrary, $0\notin T(x)$ then there exists $x^*\in T(x)$ and $y^*\in T(x)\setminus\{0\}$ such that for every $\lambda>0,$ one has $x^*\ne \lambda  y^*.$ Hence there exists $w\in X$ such that $$\langle x^*,w\rangle > 0>\langle y^*,w\rangle.$$ Obviously $x^*\in {\rm conv} T(x)$  and $x+\frac{1}{n}w\longrightarrow x.$  By using  the lower semi-continuity of ${\rm conv}T,$   there exists $y_n^*\in {\rm conv}T(x+\frac{1}{n}w)$ such that $y_n^*\longrightarrow y^*.$  On the other hand, one can have
\[
\langle x^*,(x+\frac{1}{n}w)-x\rangle>0.
\]
Since $T$ is quasimonotone,then  for $y_n^*\in {\rm conv}T(x+\frac{1}{n}w)$ it  holds
\[
\langle y_n^*,(x+\frac{1}{n}w)-x\rangle\ge 0,
\]
 and thus $\langle y^*,w\rangle\ge 0,$ which is contradiction.
\end{proof}
By similar argument in previous proposition  one can prove the following.
\begin{prop}\label{q}
Let $T$ be a quasimonotone operator, which is lower sin continuous on $K,$ and $x\in{\rm cor}K.$ Then \[(\forall x^*\in X^*\setminus\{0\},~~~T(x)\not  \subseteq  \mathbb{R_{++}}x^*)\Longleftrightarrow 0\in T(x).\]
\end{prop}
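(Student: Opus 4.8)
The plan is to mirror the proof of the preceding proposition, but to replace the semicontinuous-selection step (which relied on lower semicontinuity of ${\rm conv}T$) by a single application of lower sign-continuity of $T$ along a suitable segment. The implication $(\Leftarrow)$ is immediate: if $0\in T(x)$, then for every $x^*\in X^*\setminus\{0\}$ the point $0$ lies in $T(x)$ but not in $\mathbb{R_{++}}x^*$, so $T(x)\not\subseteq\mathbb{R_{++}}x^*$. Hence only the forward implication needs an argument, and I would prove it by contradiction.

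Assume that $T(x)\not\subseteq\mathbb{R_{++}}x^*$ for every nonzero $x^*$, yet $0\notin T(x)$. Since $0\notin T(x)$, each element of $T(x)$ is nonzero; fix $y^*\in T(x)$. Because $T(x)\not\subseteq\mathbb{R_{++}}y^*$, there is some $x^*\in T(x)$ with $x^*\ne\lambda y^*$ for all $\lambda>0$, and $x^*\ne 0$. Exactly as in the previous proposition, from $x^*,y^*\ne 0$ and $x^*\notin\mathbb{R_{++}}y^*$ one produces a direction $w\in X$ with
\[
\langle x^*,w\rangle>0>\langle y^*,w\rangle.
\]

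Next I would use $x\in{\rm cor}K$ to choose $\varepsilon>0$ so small that $y:=x+\varepsilon w\in K$, and examine the open segment $x_t=tx+(1-t)y=x+(1-t)\varepsilon w$, $t\in(0,1)$, which lies in $K$ by convexity. For each such $t$ one has $\langle x^*,x_t-x\rangle=(1-t)\varepsilon\langle x^*,w\rangle>0$, so quasimonotonicity of $T$ (applied to the pair $x,x_t$ with the witness $x^*\in T(x)$) forces $\langle x_t^*,x_t-x\rangle\ge 0$ for every $x_t^*\in T(x_t)$; since $x_t-x$ and $y-x$ are positive multiples of $w$, this says precisely that $\inf_{x_t^*\in T(x_t)}\langle x_t^*,y-x\rangle\ge 0$ for all $t\in(0,1)$.

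At this point lower sign-continuity of $T$ on $K$ does the remaining work: its defining implication yields $\inf_{x^*\in T(x)}\langle x^*,y-x\rangle\ge 0$, that is $\inf_{x^*\in T(x)}\langle x^*,w\rangle\ge 0$, which contradicts $\langle y^*,w\rangle<0$ for the fixed $y^*\in T(x)$; the contradiction establishes $0\in T(x)$. The step I expect to be the crux is the geometric set-up rather than any estimate: one must arrange the segment so that quasimonotonicity delivers the nonnegativity of $\langle x_t^*,y-x\rangle$ on the \emph{entire} open segment $(x_t)_{t\in(0,1)}$, since this is exactly the antecedent that lower sign-continuity consumes. Here it cleanly replaces the limiting selection $y_n^*\to y^*$ used before, so no passage to ${\rm conv}T$ is required, and the production of the separating direction $w$ is the same elementary fact already invoked in the previous proposition.
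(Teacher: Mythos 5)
Your argument is correct and is exactly the adaptation the paper intends: the paper gives no separate proof of Proposition \ref{q}, saying only that it follows ``by similar argument'' to the preceding proposition, and you have carried this out by keeping the same setup (the direction $w$ with $\langle x^*,w\rangle>0>\langle y^*,w\rangle$ and the quasimonotonicity step along points near $x$) while replacing the lower-semicontinuous selection $y_n^*\to y^*$ in ${\rm conv}T$ with a direct application of the lower sign-continuity implication along the segment $[x,x+\varepsilon w]$, which $x\in{\rm cor}K$ guarantees lies in $K$. No gaps; this is the same approach, fully and correctly detailed.
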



\begin{thebibliography}{99}

\bibitem{HS} P.  Hartman,  G. Stampacchia, {\em On some non-linear elliptic differential-functional equations.}      Acta Mathematica, 115 (1966),  271--310.\\


\bibitem{KS}  D.  Kinderlehrer,   G.  Stampacchia, {\em  An Introduction to Variational Inequalities and Their Applications.}   Academic Press, New York, 1980.\\




\bibitem{AN} D. Aussel, N. Hajisavvas, {\em On Quasimonotone Variational Inequalities}
J.Optim.Theory Appl. 2,  (2004),445-450.\\


\bibitem{Gk}  G.  Kassay, M.  miholca, {\em Existence results for variational inequalities with surjectivity consequnces related to generalized monotone oparator.}  J.Optim. Theory Appl. 159, (2013), 721-740 .\\

\bibitem{GM} F. Giannessi,  A. Maugeri,  {\em  Variational Inequalities and Network Equilibrium Problems.} Springer,  1995. \\


\bibitem{JJ} J.  Johannes  {\em Vector Optimization Theory and Application.} Springer, 2010.\\

\bibitem{HH}  H. Hassouni , {\em Quasimonotone multifunctions applications to optimality conditions in quasiconvex programing.} J.Num.Fun.Anal and Appl.(1992), 67-275 .\\

\bibitem{FSB}   S. Z. Fatemia, M. Shamsi and F. Bozorgnia,  {\em Extragradient Methods for Differential Variational Inequality and Linear Complementarity Systems.}  Math Meth Appl Sci.  40, (2017)   7201–7217.\\

\bibitem{DA}  A.  Daniilids,  and N. Hadjsavvas,  {\em Characterization of Nonsmooth Semistrictly Quasiconvex and Strictly Quasiconvex Functions.} J.Optim.Theory Appl.  Vol. 102,  (1999),   525-536.\\

\bibitem{YH} H.  Yiran,  {\em Solvability of the Minty variational inequality.} J.Optim.Theory Appl.  Vol. 102, (2017)   525-536.\\


\bibitem{ZLC} S. S. Zhang,   J. H W. Lee, and K. Chan,
{\em Algorithms of common solutions to quasi variational inclusion
and fixed point problems.} Appl. Math. Mech.   29(5),  (2008), 571-581.\\


\bibitem{ASA} A. Shafie, S.J afari, and  A. Farajzadeh,
{\em Existence result for minty variational inequalities with surjectivity consequences in hausdorff topological vector spaces.} J.Nonlinear.convex.Analysis.   18, (2017), 685-696.




\end{thebibliography}
\end{document}